\newtheorem{thm}{Theorem}[section]
\newtheorem{lem}[thm]{Lemma}
\newtheorem{cor}[thm]{Corollary}
\theoremstyle{definition}
\newtheorem{defin}[thm]{Definition}
\newtheorem{exam}[thm]{Example}
\theoremstyle{remark}
\newcommand { \ib }[1] {\textit{\textbf{#1}}}
\newcommand{\R}{\mathbb{R}}
\newcommand{\Z}{\mathbb{Z}}
\begin{document}
\renewcommand{\ib}{\mathbf}
\renewcommand{\proofname}{Proof}
\renewcommand{\phi}{\varphi}
\renewcommand{\epsilon}{\varepsilon}
\newcommand{\conv}{\mathrm{conv}}

\title[]{On Helly numbers for crystals and cut-and-project sets}
\author{Alexey~Garber}

\address{School of Mathematical \& Statistical Sciences, The University of Texas Rio Grande Valley, 1 West University blvd., Brownsville, TX 78520, USA.}
\email{alexey.garber@utrgv.edu}


\date{\today}

\begin{abstract}
We prove existence of Helly numbers for crystals and for cut-and-project sets with convex windows. Also we show that for a two-dimensional crystal consisting of $k$ copies of a single lattice the Helly number does not exceed $k+6$.
\end{abstract}

\maketitle

\section{Introduction}

The Helly theorem \cite{Hel} states that if every $d+1$ or fewer sets from a finite family $\mathcal{F}$ of convex sets in $\R^d$ have non-empty intersection, then all sets in $\mathcal{F}$ have non-empty intersection. In other words, it claims that the Helly number of $\R^d$ is at most $d+1$. Similarly, the following theorem claims that the Helly number of the $d$-dimensional lattice $\Z^d=\{(a_1,\ldots,a_d)| a_i\in\Z\}$ is at most $2^d$. It is not complicated to show that $d+1$ and $2^d$ are also lower bounds for Helly numbers of $\R^d$ and $\Z^d$, respectively.

\begin{thm}[J.-P. Doignon, \cite{Doi}]\label{thm:doi} Let $\mathcal{F}$ be a finite family of convex sets in $\R^d$. If every $2^d$ sets from $\mathcal{F}$ intersect at a point of $\Z^d$, then all sets from $\mathcal{F}$ intersect at a point of $\Z^d$.
\end{thm}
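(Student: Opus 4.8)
The plan is to prove the contrapositive in the standard ``infeasibility'' form: if the common intersection contains no lattice point, then already some subfamily of size at most $2^d$ has intersection avoiding $\Z^d$. Since a Helly number only concerns finite families, I would first reduce to a family of closed halfspaces. Writing $K=\bigcap_{C\in\mathcal F}C$, one separates $K$ from $\Z^d$ and, by standard separation and compactness arguments, replaces the given convex sets by finitely many supporting halfspaces $H_1,\dots,H_n$ with $\bigcap_i H_i\cap\Z^d=\emptyset$, at the same time arranging that $\bigcap_i H_i$ is a bounded polytope. This turns the claim into the equivalent assertion (sometimes attributed to Bell and Scarf): a system of linear inequalities with no integral solution already has a subsystem of at most $2^d$ inequalities with no integral solution.

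Next I would pass to a \emph{minimal} infeasible subsystem $H_1,\dots,H_m$, one for which $\bigcap_i H_i$ contains no lattice point but every proper subfamily does; the goal becomes showing $m\le 2^d$. The degenerate case is disposed of first: if $\bigcap_i H_i$ is empty already as a subset of $\R^d$, then the classical Helly theorem recalled above yields an empty subintersection of size $d+1\le 2^d$, so I may assume $\bigcap_i H_i$ is a nonempty lattice-free polytope. For each index $i$, minimality provides an integer witness $x^{(i)}$ satisfying every constraint except the $i$-th; to make the witnesses rigid I would choose $x^{(i)}$ \emph{extremally}, as a lattice point of $\bigcap_{j\ne i}H_j$ that minimizes the amount by which constraint $i$ is violated.

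The heart of the argument is a pigeonhole on the $2^d$ residue classes of $\Z^d$ modulo $2$. If $m>2^d$, two witnesses $x^{(i)},x^{(j)}$ with $i\ne j$ lie in the same class, so their midpoint $w=\tfrac12\bigl(x^{(i)}+x^{(j)}\bigr)$ is again an integer point. By convexity $w$ satisfies every constraint indexed by $k\ne i,j$, since both endpoints do; hence the only constraints $w$ can violate are the $i$-th and the $j$-th. If it violates neither, then $w$ is an integer point of $\bigcap_i H_i$, contradicting infeasibility. If it violates exactly one, say the $i$-th, then $w$ is a lattice point of $\bigcap_{k\ne i}H_k$ violating constraint $i$ by strictly less than $x^{(i)}$ does, contradicting the extremal choice of $x^{(i)}$. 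In either case we reach a contradiction, forcing $m\le 2^d$.

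I expect the main obstacle to be closing precisely this midpoint step in the one remaining configuration, where $w$ violates \emph{both} the $i$-th and the $j$-th constraint at once; there neither infeasibility nor the minimality of a single witness applies directly. The delicate point is therefore to choose the witnesses rigidly enough---through the extremal minimization above, possibly refined lexicographically---so as to exclude this configuration, or else to run a short descent on $w$, which violates fewer constraints than a generic point, until one of the clean cases is reached. Equivalently, the whole scheme can be recast as the assertion that a nonempty lattice-free polytope that is minimal in the above sense has at most $2^d$ facets, in the spirit of the classification of maximal lattice-free bodies; the parity pigeonhole is exactly what supplies the value $2^d$.
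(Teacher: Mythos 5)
The paper does not actually prove this statement---it is quoted as Doignon's theorem with a citation to \cite{Doi}---so the only point of comparison is the machinery the paper does set up, namely Lemma \ref{lem:empty}. Your skeleton is the classical Bell--Scarf--Doignon route (reduce to a minimal integer-infeasible system of halfspaces, pick one lattice witness per omitted constraint, pigeonhole the witnesses modulo $2$, take the integral midpoint), and that skeleton is correct. But the proof is not complete, and the hole is exactly where you say it is: the configuration in which the midpoint $w=\tfrac12(x^{(i)}+x^{(j)})$ violates \emph{both} constraint $i$ and constraint $j$. Your extremal choice of witnesses does not exclude it. All it yields there is $a_i^T w<a_i^T x^{(i)}$ and $a_j^T w<a_j^T x^{(j)}$; since $w$ fails two constraints, it is not an admissible competitor in either minimization, so neither the minimality of $x^{(i)}$ nor that of $x^{(j)}$ is contradicted, and infeasibility of the full system is not contradicted either. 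The proposed repairs are gestures rather than arguments: a lexicographic refinement is not specified, and the ``short descent on $w$'' comes with no quantity that provably decreases ($w$ cannot simply replace $x^{(i)}$ or $x^{(j)}$, for the same reason it is not an admissible competitor). Since every published proof of this theorem has to do genuine work precisely at this point (Doignon via a careful choice of witnesses minimizing the lattice points of their convex hull, Bell via pushing the right-hand sides to critical values, Scarf via maximal lattice-free bodies), leaving it open means the heart of the theorem is unproved.

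It is worth noting that the toolkit already present in the paper gives a cleaner closing move. By part (2) of Lemma \ref{lem:empty}, $h(\Z^d)$ equals the supremum of the number of vertices of a polytope $Q$ with vertices in $\Z^d$ containing no other lattice points. There the mod-$2$ pigeonhole closes with no case analysis: if two vertices of $Q$ were congruent modulo $2$, their midpoint would be a lattice point of $Q$ that is not a vertex (a midpoint of two distinct points of $Q$ is never an extreme point of $Q$), contradicting emptiness; hence $Q$ has at most $2^d$ vertices. The price is that all the delicacy you are wrestling with is absorbed into the proof of the equivalence in Lemma \ref{lem:empty} itself, which the paper cites from Averkov rather than proves. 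Either way, your write-up as it stands has a genuine gap at the two-constraints-violated case, and the parenthetical admission that it is ``the main obstacle'' does not discharge it.
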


This theorem is generalized in the following fractional version.

\begin{thm}[I. B\'ar\'any, J. Matou\v{s}ek, \cite{BM}]\label{thm:lattice}
For every $d\geq 1$ and every $\alpha\in(0,1]$ there exists a constant $\beta =\beta(d,\alpha)>0$ with the following property. Let $\mathcal{F}$ be a family of $n$ convex sets in $\R^d$ such that at least $\alpha\binom{n}{d+1}$ subfamilies of $d+1$ sets intersect at a point from $\Z^{d+1}$. Then there exists a point of $\Z^d$ contained in at least $\beta n$ sets from $\mathcal{F}$.
\end{thm}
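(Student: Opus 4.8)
The plan is to derive the statement from the \emph{continuous} fractional Helly theorem of Katchalski and Liu---the variant in which intersections are required only in $\R^d$ rather than in $\Z^d$---by induction on the dimension $d$. I would first delete every $C_i$ with $C_i\cap\Z^d=\emptyset$, since such a set lies in no intersecting $(d+1)$-tuple; as all $\alpha\binom{n}{d+1}$ good tuples survive, the number of remaining sets is still $\Omega_{d,\alpha}(n)$. For each surviving index put $K_i:=\conv(C_i\cap\Z^d)$, so that $K_i\cap\Z^d=C_i\cap\Z^d$. If a $(d+1)$-tuple meets in a point $p\in\Z^d$, then $p\in K_i$ for each index of the tuple, so the \emph{same} $\alpha\binom{n}{d+1}$ tuples have nonempty \emph{real} intersection for the family $\{K_i\}$. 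The classical fractional Helly theorem applied to $\{K_i\}$ then yields a point $y_0\in\R^d$ and a subfamily $\mathcal G$ with $|\mathcal G|\ge\beta_1 n$, $\beta_1=\beta_1(d,\alpha)>0$, such that $y_0\in K_i$ for every $i\in\mathcal G$. It remains to convert the shared real point $y_0$ into a shared \emph{lattice} point for a positive fraction of $\mathcal G$.

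It is worth stressing why this last step is the whole problem and cannot be bypassed by topology. Because Doignon's theorem only shows the nerve of convex lattice sets to be $(2^d-1)$-Leray, Kalai's fractional Helly theorem for Leray complexes would give the statement with $2^d$-tuples, not $(d+1)$-tuples; to reach $d+1$ one must use the geometry of $\Z^d$, and here the lattice hypothesis on the good tuples is essential (in the plane the segments $\conv\{(-m,-m),(m+1,m+1)\}$ and $\conv\{(-m,m+1),(m+1,-m)\}$ both contain $(\tfrac12,\tfrac12)$ yet share no lattice point). The mechanism I would use is a flatness dichotomy at $y_0$. If $K_i\supseteq B(y_0,\sqrt{d}/2)$ then, the covering radius of $\Z^d$ being $\sqrt{d}/2$, the set $C_i$ contains a lattice point within distance $\sqrt{d}/2$ of $y_0$; there are at most $N(d)$ lattice points in that ball, so a pigeonhole over them produces one lattice point common to a constant fraction of the ``round'' indices. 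The remaining ``flat'' indices are those for which $y_0$ is within $\sqrt{d}/2$ of $\partial K_i$; by the flatness theorem each such $K_i$ is thin in some lattice direction near $y_0$, and I would project $C_i\cap\Z^d$ along that direction onto a $(d-1)$-dimensional sublattice and invoke the inductive hypothesis. Since every good $(d+1)$-tuple restricts to a good $d$-tuple on each of its $d$-subfamilies, a family with many intersecting $(d+1)$-tuples still has many intersecting $d$-tuples, which is exactly the density input required in dimension $d-1$; the base case $d=1$ is the elementary fractional Helly theorem for integer intervals.

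I expect the \textbf{main obstacle} to be precisely the flat case. Three points need genuine work: the projection directions must be discretized so that only boundedly many $(d-1)$-dimensional sublattices occur (so that pigeonholing over directions costs only a constant factor); one must verify that the projected sets are again convex lattice sets of the kind to which the inductive statement applies; and, most seriously, one must guarantee that the subfamily of flat indices still carries a \emph{positive density} of intersecting tuples after projection, since distinct flat sets can collapse onto the same lower-dimensional set. Getting the bookkeeping of these constants to close---so that the final $\beta=\beta(d,\alpha)$ is a genuine positive function built from $\beta_1$, $N(d)$, and the constant inherited from dimension $d-1$---is where the real effort lies; the reduction to the continuous theorem and the round case are, by comparison, routine.
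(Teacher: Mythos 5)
This statement is quoted from B\'ar\'any and Matou\v{s}ek \cite{BM} as background and the paper contains no proof of it, so your attempt can only be measured against the actual difficulty of the theorem rather than against an in-paper argument. The first half of your plan is correct as far as it goes: discarding the sets with $C_i\cap\Z^d=\emptyset$, passing to $K_i=\conv(C_i\cap\Z^d)\subseteq C_i$, applying the continuous fractional Helly theorem to obtain a point $y_0$ common to a positive fraction of the $K_i$, and disposing of the ``round'' indices via the covering radius (indeed, every round $K_i$ contains the whole ball $B(y_0,\sqrt d/2)$ and hence every lattice point in it, so no pigeonhole is even needed there). You are also right that the Leray/nerve route only yields fractional Helly number $2^d$ and that the lattice geometry must enter.

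The problem is that the ``flat'' case, which you flag as the main obstacle, is not a step with missing details --- it is essentially the entire theorem, and the mechanism you propose for it breaks at two specific points. First, the flatness theorem bounds the lattice width of a convex body containing \emph{no} lattice points; your $K_i$ are hulls of lattice points, and the hypothesis that $y_0$ lies within $\sqrt d/2$ of $\partial K_i$ does not produce any lattice direction in which $K_i$ is thin: the body can have arbitrarily large lattice width in every direction while its boundary grazes $y_0$. Second, even if one fixes a single primitive direction $v$ for a positive fraction of the flat indices and projects along it, the inductive hypothesis in dimension $d-1$ only delivers a point $\bar q$ of the quotient lattice lying in many \emph{projected} sets; this says each corresponding $C_i$ contains \emph{some} lattice point on the fiber $\bar q+\R v$, but these are in general different points of a common line, and you have no density hypothesis along that fiber with which to run a one-dimensional fractional Helly argument and extract a single shared lattice point. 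Until a genuine substitute for the flat case is supplied --- one that produces a common lattice point rather than a common fiber --- the argument does not close, and the constants you list cannot even be assembled.
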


The goal of this paper is to prove Helly-type theorems for crystals and for certain cut-and-project sets, the discrete point sets defined in Section \ref{crystals}. The fractional versions of the Helly-type theorem for these families will follow from a theorem by Averkov and Weismantel \cite{AW}.

This paper is organized as follows. In Section \ref{definitions} we introduce main definitions. In Section \ref{crystals} we prove existence of Helly numbers for every $d$-dimensional crystal and for every cut-and-project set with convex window. 
Finally, in Section \ref{2dim} we give exact maximum value of the Helly number for a two-dimensional $k$-crystal, i.e. $k$ translational copies of a fixed two-dimensional lattice.

\section{Helly numbers for arbitrary sets}\label{definitions}

First we introduce the $S$-Helly number of an arbitrary set $S\subset \R^d$. 

\begin{defin}
Let $S$ be a non-empty subset of $\R^d$. We define the {\it $S$-Helly number}, or $h(S)$, to be the minimal number $n$ that satisfies the the following statement. For every finite family $\mathcal{F}$ of convex sets in $\R^d$ with at least $n$ sets, if every $n$ sets from $\mathcal{F}$ intersect at a point of $S$, then all sets from $\mathcal{F}$ intersect at a point of $S$.

\end{defin}

In particular, the classical Helly theorem means that $h(\R^d)= d+1$ and Doignon's theorem means that $h(\Z^d)= 2^d$. More results on Helly numbers for various sets can be found in \cite{ADS,DLOR}.

It is not very hard to see that for an arbitrary set $S$ its Helly number does not necessary exist (or we can say that $h(S)=\infty$) even if $S$ is a discrete set. If for any $n\geq 3$, $S$ has $n$ points forming a convex $n$-gon without additional points of $S$ inside or on the boundary, then the following lemma shows that $h(S)\geq n$ for any prescribed $n$, so $h(S)$ cannot be finite.

\begin{lem}[G. Averkov, {\cite[Thm. 2.1]{Ave}}]\label{lem:empty}
Assume $S\subset \R^d$ is discrete. Then $h(S)$ is equal to the following two numbers:
\begin{enumerate}
\item The supremum of the number of facets of a convex polytope $P$ such that each facet of $P$ contains exactly one point from $S$ in its relative interior, and no other points of $S$ are contained in $P$.
\item The supremum of the number of vertices of a convex polytope $Q$ with vertices in $S$ that does not contain any other points from $S$.
\end{enumerate}
\end{lem}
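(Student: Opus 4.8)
The plan is to prove the two supremum characterizations are equal to $h(S)$ by establishing a chain of inequalities relating $h(S)$, the facet count (quantity (1)), and the vertex count (quantity (2)). Let me denote by $F$ the supremum in (1) and by $V$ the supremum in (2). I would aim to show $V \le h(S)$, then $h(S) \le F$, and finally $F \le V$, which closes the loop and forces all three to coincide. The geometric heart of the argument is a duality between empty polytopes with vertices in $S$ and polytopes whose facets each carry exactly one point of $S$.

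First I would show $V \le h(S)$. Suppose $Q$ is a convex polytope with vertices $v_1,\dots,v_m$ in $S$ and no other points of $S$ inside $Q$. For each vertex $v_i$, consider the family of closed half-spaces $H_i$ obtained by a slight inward perturbation so that $H_i$ excludes $v_i$ but contains all other vertices $v_j$ ($j\neq i$); concretely, separate $v_i$ from the convex hull of the remaining vertices. The family $\mathcal{F}=\{H_1,\dots,H_m\}$ has the property that every proper subfamily of size $m-1$ has a common point of $S$ (namely the omitted vertex), but the whole family has empty intersection with $S$ since the only candidate points inside $Q$ are its vertices. Thus $h(S)\ge m$, and taking the supremum over all such $Q$ gives $h(S)\ge V$.

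Next I would establish $h(S)\le F$ together with $F\le V$. For the bound $h(S)\le F$, take any family $\mathcal{F}$ witnessing that $h(S)$ is large, i.e. every subfamily of a given size intersects $S$ but the full family does not. The standard approach is to pass to the polytope $P=\bigcap_{C\in\mathcal{F}} \conv(C)$ (after reducing convex sets to half-spaces via separation, exploiting that $S$ is discrete so that failure of intersection is witnessed on a bounded region), and argue that $P$ contains no point of $S$ while each defining constraint is ``active'' at a point of $S$ on its boundary; a minimality or irredundancy reduction yields a polytope each of whose facets carries exactly one point of $S$ and none inside, giving the facet count. For $F\le V$, given such a polytope $P$ with one $S$-point per facet and none interior, I would take $Q$ to be the convex hull of these facet points: since each lies in the relative interior of a distinct facet, they are in convex position and $Q\subseteq P$ contains no other point of $S$, so the number of facets of $P$ is at most the number of vertices of $Q$, hence at most $V$.

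The main obstacle I expect is the reduction step in $h(S)\le F$: carefully converting an arbitrary finite family of \emph{convex} sets (not just half-spaces) realizing the Helly failure into a polytope with the clean ``exactly one $S$-point per facet, none inside'' structure. One must handle redundant constraints, ensure the intersection can be taken bounded (using discreteness of $S$ to localize), and show each facet can be arranged to contain precisely one point of $S$ without losing the extremality of the configuration. The interplay here is delicate because shrinking or perturbing the convex sets must preserve both the nonexistence of a global $S$-point and the existence of $S$-points for every subfamily of the critical size. This is exactly where the equivalence of the combinatorial Helly condition with the polyhedral ``empty but facet-marked'' picture is forged, and I would rely on a separation-theorem argument combined with the discreteness of $S$ to control the relevant bounded region.
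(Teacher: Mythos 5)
The paper does not actually prove this lemma: it is imported verbatim from Averkov's work (Theorem 2.1 of the cited paper), so there is no in-paper proof to compare against. Judged on its own, your cyclic scheme $V\le h(S)\le F\le V$ is the right architecture, and your argument for $F\le V$ is complete and correct (the facet points are in convex position because a point in the relative interior of one facet violates no other facet's defining inequality with equality). But there are two problems. First, in the step $V\le h(S)$ there is a genuine (though fixable) gap: the intersection $\bigcap_i H_i$ of the separating half-spaces need not be contained in $Q$. For example, if $Q$ is the square with vertices $(\pm1,\pm1)$ and each $H_i$ is $\pm x\pm y\le 2-\epsilon$, the intersection is a larger rotated square protruding beyond $Q$, and it could capture a point of $S$ lying outside $Q$; your claim that ``the only candidate points are the vertices of $Q$'' silently assumes containment in $Q$. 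The standard repair is to use the convex sets $C_i=H_i\cap Q$ instead: every $m-1$ of them still contain the omitted vertex, and $\bigcap_i C_i\subseteq Q\setminus\{v_1,\dots,v_m\}$ is genuinely $S$-free.

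Second, and more seriously, the inequality $h(S)\le F$ --- which is where essentially all of the content of Averkov's theorem lives --- is not proved; you describe a plan (separation, reduction to half-spaces, irredundancy, localization via discreteness) and then correctly identify that carrying it out is the main obstacle. Extracting from a critical family of convex sets a polytope with \emph{exactly} one $S$-point in the relative interior of each facet and no other $S$-points requires a careful minimality argument (e.g., choosing witnesses $x_i\in S\cap\bigcap_{j\ne i}C_j$ so as to minimize the number of $S$-points in their convex hull, which is finite by discreteness, and then showing each witness becomes a vertex and the hull is otherwise $S$-free), and none of this is executed. As written, the proposal establishes $F\le V$ and, after the fix above, $V\le h(S)$, but the loop is not closed, so the lemma is not proved.
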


One can construct such a set $S$ to be even a Delone set, i.e. a discrete set in $\R^d$ such that every sufficiently large ball in $\R^d$ contains at least one point from this set and every sufficiently small ball in $\R^d$ contains at most one point from this set. To do this in $\R^2$ equipped with the standard Cartesian coordinates, we take the lattice $\Z^2$ and for every $n\geq 3$ add a thin $n$-gon ``almost'' on the line $x=\frac12+n$. This example can be easily generalized to $\R^d$ for every $d\geq 2$.


\section{Crystals and cut-and-project sets}\label{crystals}

\begin{defin}
Let $\Lambda$ be a full rank lattice in $\R^d$, i.e. $\Lambda$ is the set of integer linear combinations of a basis of $\R^d$. Any union of finitely many translations of $\Lambda$ is called a {\it $d$-dimensional crystal}. To be more precise, if $\mathbf{t}_1,\ldots,\mathbf{t}_k$ are vectors in $\R^d$ such that for every $i\neq j$, the difference $\mathbf{t}_i-\mathbf{t}_j\notin \Lambda$, then the set 
$$\bigcup\limits_{i=1}^k\{\Lambda+\mathbf{t}_i\}$$
is called a {\it $d$-dimensional $k$-crystal}.
\end{defin}

Crystals are the only periodic Delone sets in $\R^d$. By periodic set in $\R^d$ we mean a set which symmetry group  possesses $d$ linearly independent translations. 

Existence of a Helly number for every crystal follows from the following lemma \cite[Prop.~2.8]{ADS}.

\begin{lem}\label{lem:union}
If $S_1,S_2\subseteq \R^2$, then $h(S_1\cup S_2)\leq h(S_1)+h(S_2)$.
\end{lem}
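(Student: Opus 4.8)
The plan is to argue directly from the definition of the $S$-Helly number by reducing an infeasible configuration for $S_1\cup S_2$ to a small infeasible configuration. Throughout write $S=S_1\cup S_2$, $n_1=h(S_1)$, $n_2=h(S_2)$, and set $n=n_1+n_2$; I may assume $n_1,n_2$ are both finite, since otherwise the bound is vacuous. Call a finite family $\mathcal{G}$ of convex sets \emph{$T$-feasible} if $\bigcap\mathcal{G}$ meets $T$, and \emph{$T$-infeasible} otherwise. To prove $h(S)\le n$ it suffices to show that $n$ satisfies the defining property: for every finite family $\mathcal{F}$ with $|\mathcal{F}|\ge n$, if every $n$-element subfamily of $\mathcal{F}$ is $S$-feasible, then $\mathcal{F}$ itself is $S$-feasible.

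The conceptual key is the elementary observation that $S$-feasibility splits across the union: since $\bigcap\mathcal{F}\cap(S_1\cup S_2)=(\bigcap\mathcal{F}\cap S_1)\cup(\bigcap\mathcal{F}\cap S_2)$, a family is $S$-infeasible if and only if it is simultaneously $S_1$-infeasible and $S_2$-infeasible. So I argue by contradiction: suppose $\mathcal{F}$ is $S$-infeasible. Then $\mathcal{F}$ is in particular $S_1$-infeasible. The contrapositive of the defining property of $n_1=h(S_1)$ — applied to $\mathcal{F}$, which is legitimate because $|\mathcal{F}|\ge n\ge n_1$ — yields an $n_1$-element subfamily $\mathcal{G}_1\subseteq\mathcal{F}$ that is already $S_1$-infeasible. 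Symmetrically, $S_2$-infeasibility of $\mathcal{F}$ produces an $n_2$-element subfamily $\mathcal{G}_2\subseteq\mathcal{F}$ that is $S_2$-infeasible.

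Now combine: the subfamily $\mathcal{G}_1\cup\mathcal{G}_2\subseteq\mathcal{F}$ has at most $n_1+n_2=n$ members, and it inherits $S_1$-infeasibility from $\mathcal{G}_1$ and $S_2$-infeasibility from $\mathcal{G}_2$ (enlarging a family only shrinks its intersection), hence it is $S$-infeasible by the splitting observation. Since $|\mathcal{F}|\ge n\ge|\mathcal{G}_1\cup\mathcal{G}_2|$, I may pad $\mathcal{G}_1\cup\mathcal{G}_2$ with further members of $\mathcal{F}$ to an exactly $n$-element subfamily $\mathcal{G}'$; this $\mathcal{G}'$ is still $S$-infeasible, contradicting the hypothesis that every $n$-element subfamily of $\mathcal{F}$ is $S$-feasible. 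Therefore $\mathcal{F}$ is $S$-feasible, establishing $h(S)\le n$.

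I do not expect a serious obstacle here; the argument is purely combinatorial and dimension-free, so the restriction to $\R^2$ is inessential. The one point that needs care is the bookkeeping around the definition: one must read $h(S_i)$ correctly as ``every $n_i$-subfamily $S_i$-feasible implies all $S_i$-feasible,'' take its contrapositive to extract the small infeasible subfamilies $\mathcal{G}_i$, and then verify that monotonicity of infeasibility under passing to a superfamily lets one both merge $\mathcal{G}_1,\mathcal{G}_2$ and pad the union up to size exactly $n$ without destroying $S$-infeasibility. Equivalently, one could phrase the whole proof through the standard reformulation of $h(S)$ as the supremum of the sizes of minimally $S$-infeasible families, in which case the statement reduces to showing that any minimally $S$-infeasible family equals the union of a minimal $S_1$-infeasible subfamily and a minimal $S_2$-infeasible subfamily.
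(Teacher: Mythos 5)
Your argument is correct: splitting $S$-infeasibility into simultaneous $S_1$- and $S_2$-infeasibility, extracting small infeasible subfamilies of sizes $h(S_1)$ and $h(S_2)$ via the contrapositive, merging and padding them is exactly the standard proof of this fact, and your remark that the restriction to $\R^2$ is inessential is also right (the lemma is in fact applied in $\R^d$). Note that the paper does not prove Lemma~\ref{lem:union} at all but simply cites it as \cite[Prop.~2.8]{ADS}, where the argument given is essentially the one you wrote.
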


\begin{cor}\label{thm:crystals}
If $S$ is a $d$-dimensional $k$-crystal, then $h(S)\leq k2^d$.
\end{cor}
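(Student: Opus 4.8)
The plan is to combine Doignon's theorem, which supplies the base case $h(\Z^d)=2^d$, with the union bound of Lemma~\ref{lem:union}, after first establishing that the $S$-Helly number is invariant under invertible affine maps. The whole argument is then a short induction on the number $k$ of sheets.

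First I would record the affine-invariance step. Let $T\colon\R^d\to\R^d$ be an invertible affine transformation. For any finite family $\mathcal{F}$ of convex sets, the image $T(\mathcal{F})=\{T(F):F\in\mathcal{F}\}$ is again a family of convex sets, since affine maps preserve convexity. Because $T$ is a bijection, a subfamily of $T(\mathcal{F})$ has a common point lying in $T(S)$ precisely when the corresponding subfamily of $\mathcal{F}$ has a common point lying in $S$. Matching intersection patterns through $T$ in this way shows $h(T(S))=h(S)$ for every $S\subseteq\R^d$.

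Next I would observe that each sheet of the crystal is an affine image of $\Z^d$. Writing $\Lambda=A\Z^d$, where $A$ is a nonsingular matrix whose columns form a basis of $\Lambda$, the translate $\Lambda+\mathbf{t}_i$ equals $T_i(\Z^d)$ for the invertible affine map $T_i(\mathbf{x})=A\mathbf{x}+\mathbf{t}_i$. Hence $h(\Lambda+\mathbf{t}_i)=h(\Z^d)=2^d$ for each $i$, by the affine invariance just established together with Doignon's theorem. Finally, writing $S=\bigcup_{i=1}^k(\Lambda+\mathbf{t}_i)$ and peeling off one sheet at a time, repeated application of Lemma~\ref{lem:union} yields $h(S)\le\sum_{i=1}^k h(\Lambda+\mathbf{t}_i)=k2^d$, which is the desired bound.

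This corollary is essentially bookkeeping, so there is no serious obstacle; the only point requiring genuine care is the affine-invariance claim, where one must verify that the bijection $T$ simultaneously preserves convexity of the members of $\mathcal{F}$ and the property that a given subfamily admits a common point inside the target set. One minor caveat is that Lemma~\ref{lem:union} is stated above for $\R^2$, whereas the corollary is phrased in $\R^d$; to cover all dimensions one should invoke the $\R^d$ version of the union bound from \cite{ADS}, which the inductive step uses verbatim.
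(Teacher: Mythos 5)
Your proof is correct and follows essentially the same route as the paper: affine invariance gives $h(\Lambda+\mathbf{t}_i)=h(\Z^d)=2^d$ for each sheet, and repeated application of Lemma~\ref{lem:union} yields the sum bound $k2^d$. Your observation that Lemma~\ref{lem:union} is stated for $\R^2$ but needed in $\R^d$ is a fair catch --- the cited result in \cite{ADS} does hold in all dimensions, so this is only a typo in the paper's statement.
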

\begin{proof}
It is clear that the Helly number does not change if we apply an affine transformation to the set under consideration and thus for every $d$-dimensional lattice $\Lambda$, $h(\Lambda)=2^d$. Since $S$ can be expressed as $\bigcup\limits_{i=1}^k\{\Lambda+\mathbf{t}_i\}$, Lemma \ref{lem:union} implies that $$h(S)\leq \sum_{i=1}^kh(\Lambda+\mathbf{t}_i)=\sum_{i=1}^k2^d=k2^d$$
as claimed.
\end{proof}

In Section \ref{2dim} we show that this trivial bound is not sharp, see Theorem \ref{bound2dim} and Corollary~\ref{boundanydim}.

We will also be interested in another family of Delone sets, the cut-and-project sets. We give a slightly simplified definition here using only Euclidean spaces. We refer to \cite{BG,Moo} for a more general treatment.

\begin{defin}\label{def:cnp}
Let $\R^{d+k}$ be represented as $\R^{d+k}=\R^d\times\R^k$ and let $\Lambda$ be a lattice in $\R^{d+k}$. Let $\pi_1:\R^{d+k}\to \R^d$ and $\pi_2:\R^{d+k}\to \R^k$ be two projections on the complementary subspaces such that $\pi_1 |_{\Lambda}$ is injective, and $\pi_2(\Lambda)$ is dense in $\R^k$. Let $W \subset \R^k$ be a relatively compact set with non-empty interior --- the {\em window}. 

Then
$$V := V(\R^d,\R^k,\Lambda,W) = \{ \pi_1(\mathbf{x}) \, | \, \mathbf{x} \in \Lambda, \;
\pi_2(\mathbf{x}) \in W \}$$ is called a {\em cut-and-project set}, or a {\it model set}.

This is summarized in
the following diagram, which is called {\em cut-and-project scheme}.
$$
 \begin{array}{ccc}
\R^d & \stackrel{\pi_1}{\longleftarrow} \R^d \times \R^k
  \stackrel{\pi_2}{\longrightarrow} & \R^k \\
\cup & \cup & \cup \\
V & \Lambda & W
\end{array}
$$


\end{defin}


An example of cut-and-project is the one-dimensional Fibonacci tiling $F=F(\R,\R,\Lambda,W)$, see Figure \ref{pict:fib}. Here $\tau=\frac{\sqrt 5+1}{2}$ and $\Lambda$ is the lattice generated by the vectors $(1,1)$ and $(\tau, -\tau^{-1})$ and window $W$ is the half-open interval $[-\tau^{-1},1)$ while $\pi_1$ and $\pi_2$ are orthogonal projections. 

\begin{center}
\begin{figure}[!ht]
\includegraphics[width=\textwidth]{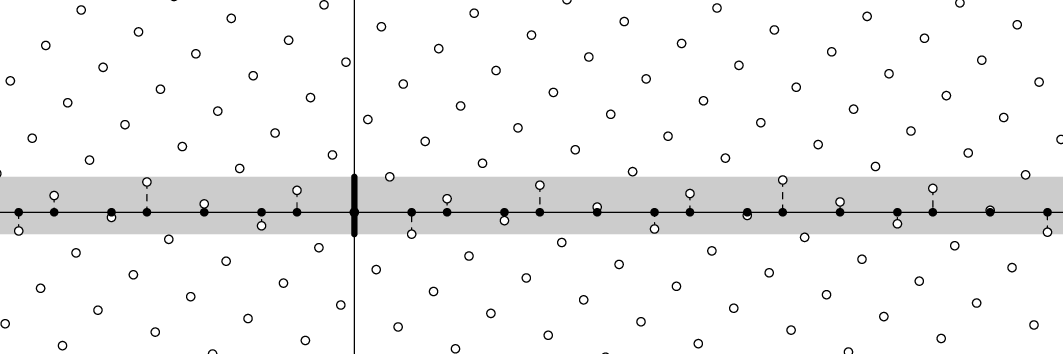}
\caption{Cut-and-project construction of the Fibonacci tiling. The bold vertical segment represents the window, the shaded strip ``cuts'' the points with $\pi_2$-projections in the window, and the solid points on the horizontal axis are the $\pi_1$-projections of the points in the strip.}
\label{pict:fib}
\end{figure}
\end{center}

More examples of cut-and-project sets include (vertex sets of) Penrose tilings \cite{deB}, Ammann-Beenker tilings \cite{Bee}, and many others, see \cite{FH} for more examples. Certain local patches of Penrose and Ammann-Beenker tilings are shown in Figure \ref{pict:PandAB}. In fact, the Penrose tiling as described in \cite{deB} is not a cut-and-project set in the sense of Definition \ref{def:cnp}. The cut-and-project scheme described in \cite{deB} uses the five-dimensional lattice $\Z^5$ with a non-dense $\pi_2$-image in the window given by a three-dimensional polytope. More precisely, $\pi_2(\Z^5)$ is dense in a union of four polygons inside the window. However, such a ``degeneracy'' if one occurs, will not affect our proof, so we will not emphasize separately whether $\pi_2$ creates a dense image of $\Lambda$ or not.

Cut-and-projects sets play a central role in the study of aperiodic order and serve as a well-established model for aperiodic crystals. We refer to \cite{Moo} and \cite{BG} for surveys on model sets, their properties, and more references.

\begin{center}
\begin{figure}[!ht]
\includegraphics[width=0.4\textwidth]{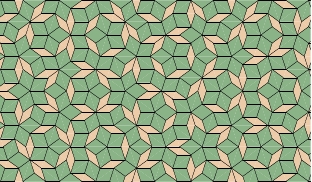}
\hskip 0.1\textwidth
\includegraphics[width=0.4\textwidth]{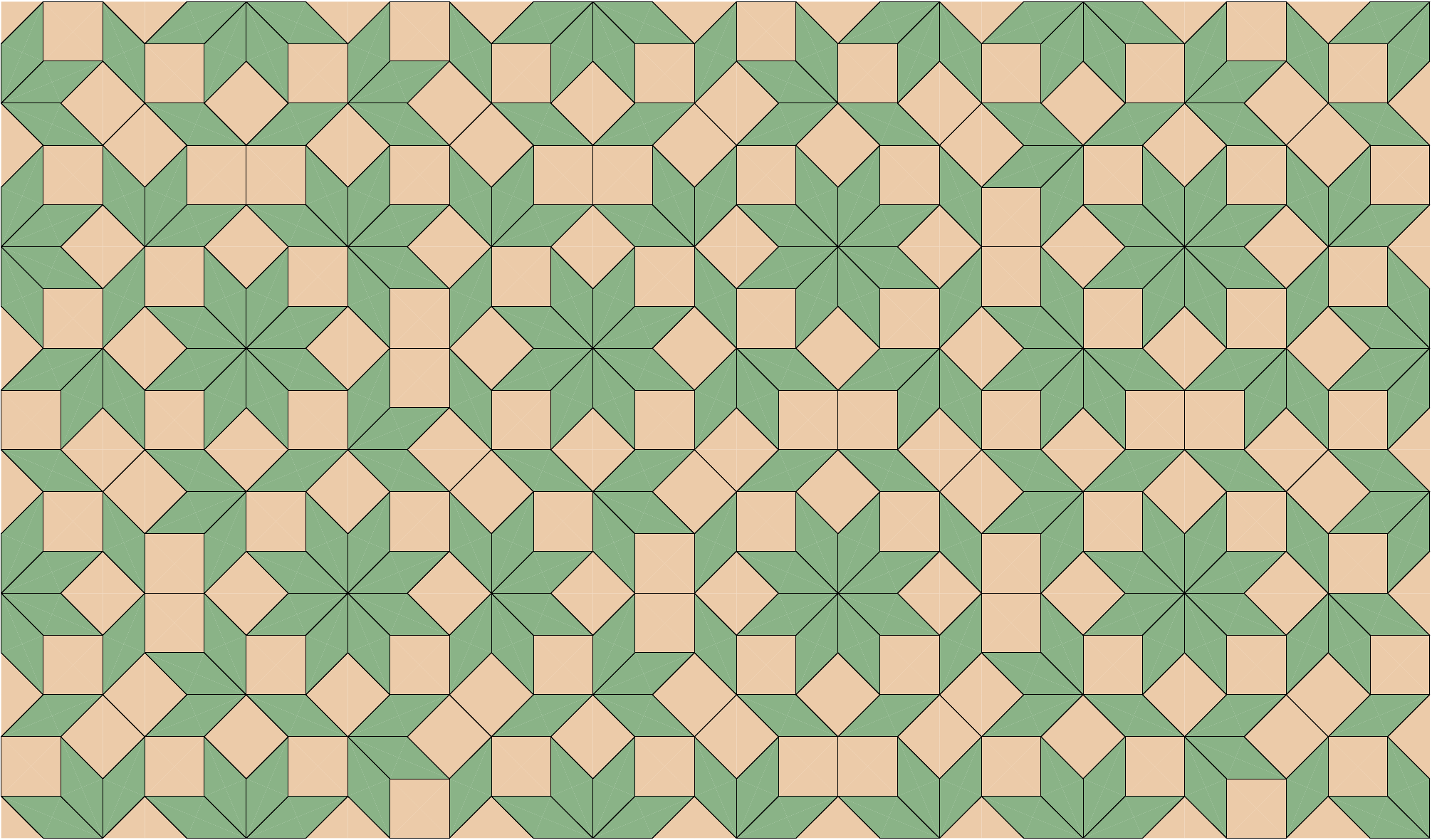}
\caption{Penrose and Ammann-Beenker tilings. The images are courtesy of Dirk Frettl\"oh.}
\label{pict:PandAB}
\end{figure}
\end{center}

Our next goal is to prove that every cut-and-project set with a convex window $W$ has a finite Helly number.

\begin{thm}\label{thm:cnp}
Let $V=V(\R^d,\R^k,\Lambda,W)$ be a $d$-dimensional cut-and-project set with a convex $k$-dimensional window $W$. Then $h(V)\leq 2^{d+k}$.
\end{thm}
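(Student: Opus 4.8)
The plan is to lift the problem from the cut-and-project set $V \subset \R^d$ up to the ambient lattice $\Lambda \subset \R^{d+k}$ and then invoke Doignon's theorem, in the same spirit as the proof of Corollary \ref{thm:crystals}. Since $\Lambda$ is a $(d+k)$-dimensional lattice and Helly numbers are invariant under affine transformations, we have $h(\Lambda) = h(\Z^{d+k}) = 2^{d+k}$. So it suffices to set up a correspondence between families of convex sets hitting $V$ and families of convex sets hitting $\Lambda$ that preserves the relevant intersection patterns.

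The key construction is as follows: to each convex set $C \subset \R^d$ I associate its lift $\tilde{C} := \pi_1^{-1}(C) \cap \pi_2^{-1}(W) \subset \R^{d+k}$. Both $\pi_1^{-1}(C)$ and $\pi_2^{-1}(W)$ are convex, being preimages of convex sets under linear maps, and it is precisely here that convexity of the window $W$ is used; their intersection $\tilde C$ is therefore convex. By the definition of $V$, a lattice point $\mathbf{x} \in \Lambda$ lies in $\tilde{C}$ exactly when $\pi_2(\mathbf{x}) \in W$ (so that $\pi_1(\mathbf{x}) \in V$) and $\pi_1(\mathbf{x}) \in C$, equivalently when $\pi_1(\mathbf{x}) \in C \cap V$.

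From this I would extract the crucial equivalence: for any index subfamily $I$, the lifts $\{\tilde{C}_i\}_{i \in I}$ have a common point in $\Lambda$ if and only if the original sets $\{C_i\}_{i \in I}$ have a common point in $V$. Indeed, if $\mathbf{x} \in \Lambda \cap \bigcap_{i \in I} \tilde{C}_i$ then $\pi_1(\mathbf{x}) \in V \cap \bigcap_{i\in I} C_i$, and conversely any $p \in V \cap \bigcap_{i \in I} C_i$ equals $\pi_1(\mathbf{x})$ for some $\mathbf{x} \in \Lambda$ with $\pi_2(\mathbf{x}) \in W$, which then lies in $\Lambda \cap \bigcap_{i \in I} \tilde{C}_i$. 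Note that this equivalence requires neither injectivity of $\pi_1|_\Lambda$ nor density of $\pi_2(\Lambda)$, which is consistent with the remark that such degeneracies do not affect the argument.

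The proof then concludes quickly. Given a finite family $\mathcal{F} = \{C_1,\dots,C_N\}$ with $N \geq 2^{d+k}$ such that every $2^{d+k}$ of its members share a point of $V$, the equivalence shows that every $2^{d+k}$ of the lifts $\tilde{C}_i$ share a point of $\Lambda$. Applying $h(\Lambda) = 2^{d+k}$ yields a common point of all the $\tilde{C}_i$ in $\Lambda$, and translating back gives a common point of all the $C_i$ in $V$, whence $h(V) \leq 2^{d+k}$. I do not anticipate a serious obstacle: the entire content is the observation that convexity of $W$ forces the lift $\tilde C$ to be convex, after which Doignon's theorem does the work. The only points deserving care are the definitional ones, namely that $\tilde C \cap \Lambda$ faithfully records membership in $C \cap V$ and that the argument survives the degenerate cases the paper flags.
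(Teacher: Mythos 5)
Your proposal is correct and follows essentially the same route as the paper's own proof: lifting each convex set to $\pi_1^{-1}(C)\cap\pi_2^{-1}(W)$, observing convexity of the lift via convexity of $W$, and applying Doignon's theorem to the $(d+k)$-dimensional lattice $\Lambda$. Your explicit two-way equivalence between common points of the lifts in $\Lambda$ and common points of the originals in $V$ is exactly the content of the paper's argument, just stated a bit more symmetrically.
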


\begin{proof}
Let $\mathcal{F}$ be a finite family of convex $d$-dimensional sets such that every $2^{d+k}$ sets in $\mathcal{F}$ intersect at a point of $V$. We will show that all sets from $\mathcal{F}$ intersect at a point of $V$ which will be enough to prove the theorem.

Let $\pi_1$ and $\pi_2$ be the projections used in the construction of $V$. By $\pi_1^{-1}(X_1)$ and $\pi_2^{-1}(X_2)$ we will denote  preimages of sets $X_1$ and $X_2$ with respect to $\pi_1$ and $\pi_2$, respectively. 
For every set in $\mathcal F$, we construct its $\pi_1$-preimage and intersect with the $\pi_2$-preimage of the window. In other words, we let $\mathcal{F}'=\{\pi_1^{-1}(F)\cap\pi_2^{-1}(W)|F\in\mathcal{F}\}$. Since $W$ is convex and the preimage of a convex set with respect to every projection is convex, $\mathcal{F}'$ is a finite family of convex sets in $\R^{d+k}$.

Let $F'_1, \ldots, F'_{2^{d+k}}$ be any sets from $\mathcal{F'}$. Their projections $\pi_1(F'_1), \ldots, \pi_1(F'_{2^{d+k}})$ are sets from $\mathcal{F}$ so they intersect at a point $\mathbf{x}\in V$. By the construction of $V$, there is a (unique) point $\mathbf{x}'\in \Lambda$ such that $\pi_1(\mathbf{x}')=\mathbf{x}$ and $\pi_2(\mathbf{x}')\in W$. Therefore $\mathbf{x}'$ belongs to every $F'_i$ for $i=1,\ldots,2^{d+k}$, and every $2^{d+k}$ sets from $\mathcal{F}'$ intersect at a point of $\Lambda$. Since $h(\Lambda)=2^{d+k}$, all sets from $\mathcal{F}'$ intersect at a point $\mathbf{x}'_0\in \Lambda$. 
The projection $\mathbf{x}_0=\pi_1(\mathbf{x}'_0)$ is a point of $V$ because $\pi_2(\mathbf{x}'_0)\in W$. This projection $\mathbf{x}_0$ belongs to every set from $\mathcal{F}$, therefore all sets from $\mathcal{F}$ intersect at a point of $V$.
\end{proof}

Using Lemma \ref{lem:empty} we get the following corollary for every cut-and-project set with convex window.

\begin{cor}\label{cor:empty}
If $V$ is a $d$-dimensional cut-and-project set with a convex $k$-dimensional window, then every convex polytope with at least $2^{d+k}+1$ vertices in $V$ has an additional point of $V$ inside or on the boundary.
\end{cor}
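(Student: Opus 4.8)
The plan is to derive this corollary directly from Theorem \ref{thm:cnp} by feeding it into Averkov's combinatorial characterization of the Helly number recorded in Lemma \ref{lem:empty}. First I would note that a cut-and-project set $V$ is a Delone set and in particular discrete, so that Lemma \ref{lem:empty} applies to it. By part (2) of that lemma, $h(V)$ equals the supremum, taken over all convex polytopes $Q$ whose vertices lie in $V$ and which contain no point of $V$ other than those vertices, of the number of vertices of $Q$.

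Next I would invoke Theorem \ref{thm:cnp}, which gives $h(V)\leq 2^{d+k}$ for a $d$-dimensional cut-and-project set with convex $k$-dimensional window. Combining this bound with the characterization above yields the key intermediate fact: every convex polytope $Q$ all of whose vertices belong to $V$ and which contains no further point of $V$ has at most $2^{d+k}$ vertices.

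Finally I would argue by contraposition, exactly as in the preceding corollary for Penrose tilings. Let $Q$ be a convex polytope with at least $2^{d+k}+1$ vertices, each of which is a point of $V$. If $Q$ contained no point of $V$ besides its own vertices, then by the intermediate fact $Q$ would have at most $2^{d+k}$ vertices, contradicting the hypothesis. Hence $Q$ must contain some additional point of $V$, which is the claimed vertex of $V$ inside $Q$.

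The argument is essentially immediate once Theorem \ref{thm:cnp} and Lemma \ref{lem:empty} are available, so I expect no genuine obstacle in the deduction itself. The one point that warrants care is a hypothesis check rather than a computation: Averkov's characterization is stated for discrete sets, so one should confirm that $V$ is discrete (which follows from its being a Delone set) before applying part (2) of the lemma. I would also be mildly careful about the word \emph{inside}: the lemma speaks of a point \emph{contained} in $Q$, which a priori allows that extra point of $V$ to sit on the boundary of $Q$ rather than strictly in its interior, and I would phrase the conclusion to match whichever reading is intended.
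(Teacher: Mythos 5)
Your proposal is correct and matches the paper's intended argument exactly: the paper derives this corollary by combining Theorem \ref{thm:cnp} with part (2) of Lemma \ref{lem:empty}, just as you do (and your remark that ``inside'' should really be read as ``inside or on the boundary'' is a fair point about the paper's phrasing).
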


For the special case of the Penrose tilings, we get the following estimate.

\begin{cor}
If $P$ is the set of vertices of a Penrose tiling, then $h(P)\leq 32$. In particular, every convex polytope with $33$ vertices in $P$ has an additional vertex of the same Penrose tiling inside or on the boundary.
\end{cor}
\begin{proof}
Sections 7 and 8 of \cite{deB} establish that vertices of a Penrose tiling form a two-dimensional cut-and-project set with a three-dimensional window given by a convex polytope. Applying Theorem \ref{thm:cnp} we get $h(P)\leq 2^{2+3}=32$. The second statement of the corollary is just a reformulation of Corollary \ref{cor:empty} for the Penrose tiling.
\end{proof}




Theorem \cite[Thm. 1.3]{AW} states that if the Helly number of a closed subset $S\subseteq \R^d$ is finite, then there is a fractional version of the Helly theorem for $S$. Additionally, the fractional Helly number is at most $d+1$ meaning that existence of any fraction of subfamilies of $d+1$ sets in $S$ with nonempty intersections implies existence of another fraction of sets in $S$ with a nonempty intersection. Corollary \ref{thm:crystals} and Theorem \ref{thm:cnp} guarantee that this can be applied to crystals and cut-and-project sets as both classes consist of closed sets.

\begin{cor}
Let $S\subset \R^d$ be a crystal or a cut-and-project set with a convex window. Then the fractional Helly number of $S$ is at most $d+1$. 

That is, for every $\alpha\in(0,1]$ there exists a constant $\beta =\beta(S,\alpha)>0$ with the following property. Let $\mathcal{F}$ be a family of $n$ convex sets in $\R^d$ such that at least $\alpha\binom{n}{d+1}$ subfamilies of $d+1$ sets intersect at a point from $S$. Then there exists a point of $S$ contained in at least $\beta n$ sets from $\mathcal{F}$.
\end{cor}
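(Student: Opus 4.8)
The plan is to obtain this corollary as a direct consequence of the cited fractional Helly theorem \cite[Thm.~1.3]{AW}, whose hypotheses are that $S$ be closed and that its Helly number $h(S)$ be finite, and whose conclusion supplies exactly the fractional Helly constant $d+1$ that appears in the statement. Thus the whole proof reduces to checking these two hypotheses for $S$ and then quoting the theorem.

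First I would verify that $S$ is closed. If $S=\bigcup_{i=1}^k(\Lambda+\mathbf{t}_i)$ is a $d$-dimensional $k$-crystal, then each $\Lambda+\mathbf{t}_i$ is a translate of a discrete subgroup of $\R^d$ and hence closed, and a finite union of closed sets is closed. If instead $S=V(\R^d,\R^k,\Lambda,W)$ is a cut-and-project set with compact window $W$, then injectivity of $\pi_1|_\Lambda$ together with boundedness of $W$ forces only finitely many points of $\Lambda$ to project into any bounded region of $\R^d$; hence $V$ is uniformly discrete, has no accumulation points, and is therefore closed. Second, I would record the finiteness of $h(S)$ already proved above: Corollary \ref{thm:crystals} gives $h(S)\le k2^d$ in the crystal case, and Theorem \ref{thm:cnp} gives $h(S)\le 2^{d+k}$ in the cut-and-project case.

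With both hypotheses in hand, \cite[Thm.~1.3]{AW} applies and yields the fractional Helly property for $S$ with fractional constant at most $d+1$, which is precisely the asserted statement: for every $\alpha\in(0,1]$ there is $\beta=\beta(S,\alpha)>0$ such that whenever at least $\alpha\binom{n}{d+1}$ of the $(d+1)$-element subfamilies of $\mathcal{F}$ meet at a point of $S$, some single point of $S$ lies in at least $\beta n$ members of $\mathcal{F}$. The point that genuinely requires care is not any new computation but the verification of the hypotheses, and in particular the observation that Theorem \ref{thm:cnp} is available only when the window $W$ is \emph{convex}; accordingly the cut-and-project case of this corollary should be read as pertaining to sets with convex window. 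Beyond this bookkeeping there is no further obstacle, since all of the analytic content is carried by the cited theorem together with the finiteness bounds established earlier.
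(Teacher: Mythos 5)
Your proposal is correct and follows exactly the paper's own route: verify that $S$ is closed and that $h(S)$ is finite via Corollary \ref{thm:crystals} and Theorem \ref{thm:cnp}, then invoke \cite[Thm.~1.3]{AW}. Your added remark that the cut-and-project case implicitly requires a convex window (since that is the hypothesis of Theorem \ref{thm:cnp}) is a fair and worthwhile clarification of the paper's statement.
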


\section{Helly numbers for two-dimensional crystals}\label{2dim}

In the final section we give a sharp bound for the Helly numbers of two-dimensional crystals depending on the number of copies of the lattice used. Corollary \ref{thm:crystals} says that the Helly number of a two-dimensional $k$-crystal is not greater than $4k$. Below we will show that it is not greater than $k+6$ for $k\geq 6$, and even smaller for other $k$. Without loss of generality for all crystals below we will use $\Z^2$ as the generating lattice.

We start from an example of a $k$-crystal with Helly number at least $k+6$ and later show that it is optimal for $k\geq 6$. Throughout the section we refer to the second part of Lemma \ref{lem:empty}, so for a given crystal $S$ we will be interested only in empty convex polygons with vertices from $S$ with maximal number of vertices. 

\begin{exam}\label{exam:k+6}
Figure \ref{pict:k+6} shows an example of an empty $12$-gon within a $6$-crystal obtained as six translations of the interger lattice $\Z^2$.  The integer points are marked with blue in this picture.

If we want to construct a crystal with more than 6 copies of $\Z^2$, then we can add any number of additional copies translated by vectors represented by points on the red circular arc next to the 12-gon in the same Figure \ref{pict:k+6}. Note, that if we add $k-6$ additional points (so the resulting crystal will be a $k$-crystal), then we will be able to find an empty convex $(k+6)$-gon (12 initial points and $k-6$ points on the red arc immediately to the right of it), so the resulting crystal will have Helly number at least $k+6$.



\begin{center}
\begin{figure}[!ht]
\includegraphics[width=0.5\textwidth]{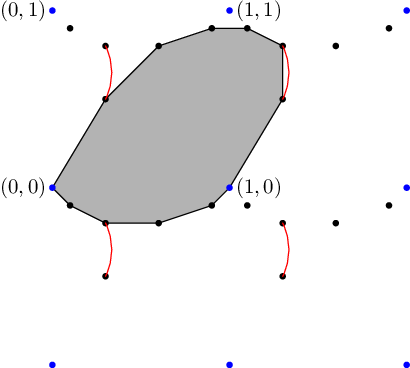}
\caption{Two-dimensional $6$-crystal with Helly number $12$. The red arcs allow construction of $k$-crystals with Helly number $k+6$ for every $k\geq 6$.}
\label{pict:k+6}
\end{figure}
\end{center}

\end{exam}

In the remaining part of the paper we will show that the previous example is optimal in case there are at least 6 copies of $\Z^2$. We start from two simple auxiliary lemmas. 

\begin{lem}\label{lem:4gon}
Every convex quadrilateral $Q$ is either a parallelogram, or contains a parallelogram $Q'$ such that three vertices of $Q'$ are vertices of $Q$.
\end{lem}

\begin{proof}
Without loss of generality we can assume that three vertices of the quadrilateral are points $A=(0,1)$, $B=(0,0)$, and $C=(1,0)$, and the fourth vertex $D=(x,y)$ satisfies $x,y>0$ and $x+y>1$.

If $x>1$ and $y>1$, then the parallelogram $ABCM$ with $M=(1,1)$ will be inside $ABCD$. If $x>1$ and $y\leq 1$, then the parallelogram $BCDM$ with $M=(x-1,y)$ will be inside $ABCD$. If $x\leq 1$ and $y>1$, then the parallelogram $ABMD$ with $M=(x,y-1)$ will be inside $ABCD$. If $x\leq 1$ and $y\leq 1$ then $ABCD$ is either a parallelogram, or contains the parallelogram $AMCD$ with $M=(1-x,1-y)$.
\end{proof}

\begin{lem}\label{lem:parallel}
Let $XX'$, $YY'$ and $ZZ'$ be three equal and parallel segments. Then $\conv(X,Y,Z,X',Y',Z')$ contains at least one of these six points inside or on the boundary.
{\sloppy 

}
\end{lem}

\begin{proof}
If two of the segments $XX'$, $YY'$ and $ZZ'$ lie on one line, then the lemma is trivial. Otherwise, without loss of generality we can assume that points have the following coordinates: $X=(0,0)$, $X'=(1,0)$, $Y=(0,1)$, $Y'=(1,1)$, $Z=(x,y)$, $Z'=(x+1,y)$ for $y>1$.

If $x>0$, then $Y'$ lies in $\conv(X,X',Y,Z,Z')$. Otherwise, $Y$ lies in $\conv(X,X',Y',Z,Z')$.
\end{proof}

\begin{thm}\label{bound2dim}
Let $S$ be a $2$-dimensional $k$-crystal. Then $h(S)\leq k+6$.
\end{thm}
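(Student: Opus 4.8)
The plan is to invoke the second characterization in Lemma~\ref{lem:empty}: since $S$ is discrete, $h(S)$ equals the supremum of the number of vertices of a convex polygon $Q$ whose vertices lie in $S$ and which contains no other point of $S$. So it suffices to fix one such empty polygon $Q$ with $n$ vertices and prove $n\le k+6$. Write $S=\bigcup_{i=1}^k(\Z^2+\mathbf t_i)$ and, for each $i$, let $n_i$ be the number of vertices of $Q$ lying in the copy $\Z^2+\mathbf t_i$, so that $n=\sum_i n_i$.

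First I would record the two basic reductions. Because the vertices of $Q$ coming from a single copy are in strictly convex position, their convex hull $Q_i=\conv(\{\text{copy-}i\text{ vertices of }Q\})$ is a polygon all of whose $n_i$ vertices are vertices of $Q$; and since $Q_i\subseteq Q$ and $Q$ contains no point of $S$ other than its vertices, $Q_i$ contains no point of $\Z^2+\mathbf t_i$ other than its own vertices. As $\Z^2+\mathbf t_i$ is an affine image of $\Z^2$ and $h(\Z^2)=4$ by Doignon's theorem, the empty-polygon characterization applied to the single lattice forces $n_i\le 4$. Letting $m\le k$ denote the number of copies with $n_i\ge1$, one gets $n=\sum_{n_i\ge1}n_i=m+E$, where $E:=\sum_{n_i\ge2}(n_i-1)$ is the total ``excess''; hence, as $m\le k$, the whole theorem reduces to the purely geometric claim that $E\le 6$.

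To bound $E$ I would analyze the monochromatic structure of $Q$. For every copy with $n_i\ge2$ the hull $Q_i$ is an empty lattice polygon of $\Z^2+\mathbf t_i$, and each of its edges is a primitive vector of $\Z^2$ that is a chord of $Q$ through its interior. The extremal Example~\ref{exam:k+6} pinpoints the expected mechanism: there the six contributing copies realise only the three primitive difference vectors $(1,0),(0,1),(1,1)$, each used twice, so the twelve vertices split into six ``antipodal'' pairs. The key lemma I would isolate (and relegate to the appendix) is therefore a statement to the effect that these primitive chords can occur in at most three directions, each at most twice, so that the excess they generate never exceeds $6$; along the way I would try to show that two such chords coming from different copies must cross, since a non-crossing pair tends to leave a vertex of one copy inside the region cut off by the other, eventually producing a point of $S$ inside $Q$. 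The constant $6$ should emerge as $2\pi$ divided by a minimal angular separation $\pi/3$ forced between consecutive primitive directions of an empty configuration, i.e.\ from the fact that the shortest lattice vectors bounding an empty polygon behave like the edge vectors of a primitive centrally symmetric hexagon.

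The main obstacle is exactly this last step, converting the local emptiness constraints into the global inequality $E\le6$. The cases $n_i=3$ and $n_i=4$ are the delicate ones, since a single copy carrying an empty triangle or empty quadrilateral already contributes excess $2$ or $3$ and its several edges interact with the chords of the other copies; I expect to handle them by showing that such a ``fat'' copy so rigidly constrains the emptiness of $Q$ that it suppresses contributions elsewhere, keeping the weighted total at $6$. A careful write-up must also treat the possibility that a copy's vertices are not consecutive along $\partial Q$ and the boundary cases of small $k$ (where $m<6$), but neither affects the final inequality $n\le k+6$, which is all that is required.
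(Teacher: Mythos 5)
Your opening reduction is correct and is essentially the same skeleton the paper uses: pass to a maximal empty polygon via the second part of Lemma~\ref{lem:empty}, note that each copy $\Z^2+\mathbf{t}_i$ contributes at most $4$ vertices because $h(\Z^2)=4$, and reduce the theorem to the claim that the total excess $E=\sum_{n_i\ge 2}(n_i-1)$ is at most $6$. But the proof stops exactly where the real work begins: everything after that point is a description of what you ``would'' or ``expect to'' prove, and the statements you gesture at are not established. Concretely, three things are missing. First, for a copy with $n_i=2$ you need that the difference vector of its two vertices is a primitive lattice vector (otherwise an intermediate point of that copy lies inside $Q$); you assert this in passing but do not prove it. Second, you need that at most two copies can use the same difference vector; this is the content of the paper's Lemma~\ref{lem:parallel} on three equal parallel segments, and your substitute heuristic (a minimal angular separation $\pi/3$ giving $2\pi/(\pi/3)=6$) is not a proof. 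The paper's actual argument is arithmetic rather than angular: any two of the occurring primitive vectors must form a basis of $\Z^2$ (else, after normalizing one of them to $(1,0)$, a point such as $(\lfloor x/y\rfloor+1,1)$ or its companion lands inside $Q$), and one cannot have four vectors pairwise forming bases of $\Z^2$, since normalizing two of them to $(1,0),(0,1)$ forces the rest to be of the form $(\pm1,\pm1)$, no two of which have determinant $\pm1$. That yields at most three directions, each used at most twice, hence excess at most $6$ in the $n_i\le 2$ regime.

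Third, and most seriously, the cases $n_i=4$ and $n_i=3$, which you explicitly defer, are not routine and cannot be settled by asserting that ``a fat copy suppresses contributions elsewhere.'' The paper handles them with specific geometry: an empty quadrilateral from one copy must be a parallelogram (via Lemma~\ref{lem:4gon}), hence a fundamental parallelogram, which then contains a point of every other copy and forces $k=1$; an empty triangle from one copy has area $\tfrac12$ and, after normalization, confines \emph{all} vertices of $Q$ to an explicit triangle, which is what limits the admissible difference vectors of the other copies to $(1,0),(0,1),(1,1)$ and shows that two copies each contributing three vertices already tile the plane by hexagons and force $k=2$, $n\le 6$. Without these arguments the inequality $E\le 6$ remains a conjecture, so as written the proposal has a genuine gap at its central step.
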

\begin{proof}
Let $A_1,A_2\ldots,A_n$ be points in $S$ with maximal possible $n$ such that $P=\conv(A_1,A_2\ldots,A_n)$ is a convex polygon without additional points of $S$ inside or on the boundary. According to Lemma \ref{lem:empty}, it is enough to show that $n\leq k+6$.
{\sloppy

}

Let $N$ be the maximal number of vertices of $P$ that belong to one copy of $\Z^2$. We will study all possible cases of $N$ and find a bound for $n$ for each case.

\subsection*{Case 1: $N\geq 5$.} This case is impossible as according to Lemma \ref{lem:empty} combined with Theorem \ref{thm:doi} for $d=2$, the convex hull of every $5$-gon with vertices at integer points will have an integer point inside or on the boundary since $h(\Z^2)=4$.

\subsection*{Case 2: $N=4$.} 



Let $A,B,C,D$ be four vertices of $P$ from one copy of $\Z^2$. If $\conv(A,B,C,D)$ is not a parallelogram, then the fourth vertex of the parallelogram from Lemma \ref{lem:4gon} is also from the same copy of $\Z^2$ and $P$ is not empty.

If $\conv(A,B,C,D)$ is a parallelogram, then it is a fundamental parallelogram of $\Z^2$ because it is empty, and thus it will have a point from every translation of $\Z^2$ inside or on the boundary. Therefore, if $k\geq 2$, then this case is impossible. We will collect maximal values of $n$ for various $k$ in the following table:
$$
\begin{array}{l|r|c|c|c|c|c|c|}
\text{Case }2:N=4&k&1&2&3&4&5&\geq 6\\\hline
&\text{max. }n&4&-&-&-&-&-
\end{array}
$$

\subsection*{Case 3: $N=3$.} Let $A,B,C$ be three vertices of $P$ from one copy of $\Z^2$. The area of the triangle $ABC$ must be $\frac12$ or it will be non-empty otherwise. Applying an affine transformation we can make $A=(0,0)$, $B=(1,0)$ and $C=(1,1)$ as every lattice triangle with area $\frac12$ can be transformed into this one.

Assume $P$ has a vertex $M=(x,y)$ with $y>1$. Then the triangle $ABM$ is not empty because it contains either the integer point $(\lfloor \frac{x}{y}\rfloor+1,1)$ or the point $(x-\lfloor \frac{x}{y}\rfloor,y-1)\in M+\Z^2$. Similarly $P$ can't have a vertex $M=(x,y)$ with $x<0$ (then the triangle $BCM$ will be non-empty) or with $x-y>1$ (then the triangle $CAM$ will be non-empty). Therefore, all vertices of $P$ are inside the triangle with vertices $(0,-1)$, $(2,1)$, $(0,1)$.

Assume there is one more copy of $\Z^2$ which contains three vertices of $P$. Then these vertices should be $D=(x,y)$, $E=(x+1,y)$, and $F=(x,y-1)$, see Figure \ref{pict:3+3}.
\begin{center}
\begin{figure}[!ht]
\includegraphics[width=0.5\textwidth]{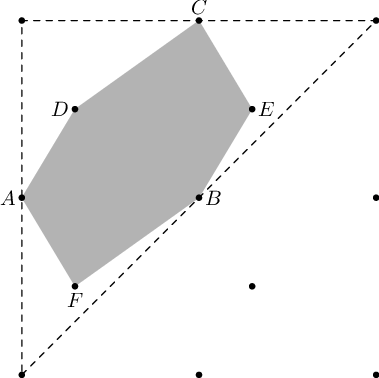}%
\caption{$P$ has three vertices from two copies of $\Z^2$.}%
\label{pict:3+3}%
\end{figure}
\end{center}
Now we can see that translations of the hexagon $AFBECD$ cover the plane, and there could not be other copies of $\Z^2$ in the crystal $S$. Also $P$ can't have more than 6 vertices as all the points of $S$ inside the triangle with vertices $(0,-1)$, $(2,1)$, $(0,1)$ are already vertices of $P$.

For the rest of this case all other copies of $\Z^2$ have at most 2 points among vertices of $P$. Note, that if a copy of $\Z^2$ has two points among vertices of $P$, then the vector connecting these vertices should be $(0,1)$, $(1,0)$, or $(1,1)$, otherwise both points can't be inside the triangle with vertices $(0,-1)$, $(2,1)$, $(0,1)$. This, together with Lemma \ref{lem:parallel} will let us bound the number of copies of $\Z^2$ with two points among vertices of $P$. 

%

If there are at least four additional copies of $\Z^2$ with two points among vertices of $P$, then at least two corresponding pairs of segments are equal and parallel, and applying Lemma~\ref{lem:parallel} to these four points and a parallel side of the triangle $ABC$ we get a contradiction. Thus, if $k\geq 4$, then all copies of $\Z^2$, except possibly four, can have at most one point among vertices of $P$. If $k$ is 3, then one copy can have three points, and two other can have two points each.

Altogether, the third case can be summarized in the following table:
$$
\begin{array}{l|r|c|c|c|c|c|c|}
\text{Case }3:N=3&k&1&2&3&4&5&\geq 6\\\hline
&\text{max. }n&3&6&7&9&10&k+5
\end{array}
$$

\subsection*{Case 4: $N=2$.} We will show that there are at most $6$ copies of $\Z^2$ that have exactly two points among vertices of $P$. We will prove that statement by contradiction and we assume that there are at least $7$ copies of $\Z^2$ that have two points among vertices of $P$ each. 

Every vector connecting two vertices of $P$ from the same copy of $\Z^2$ must be a basis vector of $\Z^2$. According to Lemma \ref{lem:parallel}, there could be at most two vectors of any direction, so there will be vectors of at least four directions. We claim that among these four vectors there exist two that do not form a basis of $\Z^2$. 

Again, looking for contradiction, If every pair forms a basis, we can assume that one pair is the standard basis $(1,0)$ and $(0,1)$. Then each coordinate of the remaining vectors should be $1$ or $-1$, but any two vectors of the form $(\pm1,\pm1)$ do not form a basis of $\Z^2$ since the corresponding determinant is 0 or $\pm 2$. Thus, there are two vectors among the initial four that do not form a basis of $\Z^2$.

Without loss of generality we may assume that points $(0,0)$ and $(1,0)$ form one vector of the two, and the second vector is formed by points $(x,y)$ and $(x',y')$ with $|y-y'|\geq 2$. Also, without loss of generality we may assume that $y\geq 1$. Similarly to an argument in the third case, the polygon $P$ will not be empty as it will contain the point $(\lfloor \frac{x}{y}\rfloor+1,1)$ or the point $(x-\lfloor \frac{x}{y}\rfloor,y-1)$. This gives the final contradiction and we have established that at most $6$ copies of $\Z^2$ are represented by two points among vertices of $\Z^2$; all other $k-6$ copies (if $k\geq 6$) are represented by at most one vertex of $P$.



Thus, the table for Case 3 is the following (case $k=1$ is impossible here, as $P$ can't be a $2$-gon):
$$
\begin{array}{l|r|c|c|c|c|c|c|}
\text{Case }4:N=2&k&1&2&3&4&5&\geq 6\\\hline
&\text{max. }n&-&4&6&8&10&k+6
\end{array}
$$

\subsection*{Case 5: $N=1$.}
In that case we can write the table immediately:
$$
\begin{array}{l|r|c|c|c|c|c|c|}
\text{Case }5:N=1&k&1&2&3&4&5&\geq 6\\\hline
&\text{max. }n&-&-&3&4&5&k
\end{array}
$$

Summarizing all tables we can see that $h(S)\leq k+6$ for every $k$.
\end{proof}

In addition to the previous theorem we can find a sharp bound for $h(S)$ for every $k$.

If $k\geq 6$ then $h(S)\leq k+6$ and the example with $h(S)= k+6$ for any $k\geq 6$ is described in Example \ref{exam:k+6}. If $k=5$, then $h(S)\leq 10$ and for the equality we can take Example \ref{exam:k+6} and remove one copy of $\Z^2$. If $k=4$ then $h(S)\leq 9$ and an example with equality consist of $\Z^2$ shifted by $(0,0)$, $(\frac{2}{10},\frac{3}{10})$, $(\frac{7}{10},\frac{8}{10})$, $(\frac{12}{10},\frac{8}{10})$, see Figure \ref{pict:k=4}. If $k=3$, then $h(S)\leq 7$ and for an example with equality we can take the crystal for $k=4$ and remove the copy of $\Z^2$ translated by $(\frac{12}{10},\frac{8}{10})$. If $k=2$, then $h(S)\leq 6$ with example shown in Figure \ref{pict:3+3}. Finally, if $k=1$ then $S$ is a lattice and $h(S)$=4.

\begin{center}
\begin{figure}[!ht]%
\includegraphics[width=0.5\textwidth]{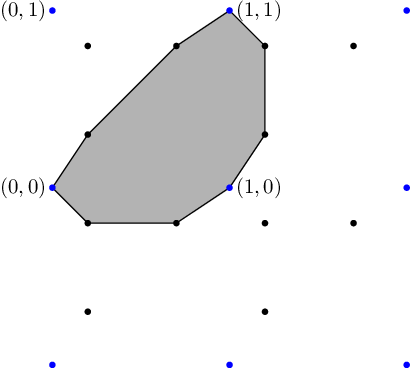}%
\caption{$4$-crystal with $h(S)=9$.}%
\label{pict:k=4}%
\end{figure}
\end{center}

Altogether this can be summarized in the following table:
$$
\begin{array}{r|c|c|c|c|c|c|}
k&1&2&3&4&5&\geq 6\\\hline
\text{max. }n&4&6&7&9&10&k+6
\end{array}
$$

As the last result we discuss some bounds in higher dimensions.

\begin{cor}\label{boundanydim}
For every $k\geq 6$ there exists a $d$-dimensional $k$-crystal $S$ such that $h(S)\geq 2^{d-2}(k+6)$
\end{cor}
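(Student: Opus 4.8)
The plan is to reduce the $d$-dimensional statement to the two-dimensional construction of Example~\ref{exam:k+6} by taking a direct product with a lattice. Let $S_0=\bigcup_{i=1}^k(\Z^2+\mathbf t_i)\subset\R^2$ be the two-dimensional $k$-crystal from Example~\ref{exam:k+6}, which carries an empty convex $(k+6)$-gon $P_0$ whose vertices lie in $S_0$. I would set
$$S=S_0\times\Z^{d-2}=\bigcup_{i=1}^k\bigl(\Z^d+(\mathbf t_i,\mathbf 0)\bigr)\subset\R^d,$$
and first check that $S$ is a genuine $d$-dimensional $k$-crystal: its underlying lattice is $\Z^d$, and the shifts $(\mathbf t_i,\mathbf 0)$ remain pairwise incongruent modulo $\Z^d$ because $\mathbf t_i-\mathbf t_j\notin\Z^2$ for $i\neq j$.

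The core of the argument is to exhibit a single empty convex polytope in $S$ with $2^{d-2}(k+6)$ vertices and then invoke the second part of Lemma~\ref{lem:empty}. The candidate is the product $P=P_0\times[0,1]^{d-2}$. Since the vertex set of a product of two polytopes is the product of their vertex sets, $P$ has exactly $(k+6)\cdot 2^{d-2}$ vertices, each of the form (vertex of $P_0$, corner of the unit cube); every such point lies in $S_0\times\Z^{d-2}=S$.

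The step I expect to require the most care, though it is still routine, is verifying that $P$ contains no point of $S$ other than its vertices. A point $(s,z)\in S$ lies in $P$ precisely when $s\in S_0\cap P_0$ and $z\in\Z^{d-2}\cap[0,1]^{d-2}$. Because $P_0$ is empty in $S_0$, the first condition forces $s$ to be a vertex of $P_0$; because the unit cube contains no lattice points besides its $2^{d-2}$ corners, the second forces $z$ to be a corner. Hence $(s,z)$ is necessarily one of the product vertices, so $P$ is an empty convex polytope with vertices in $S$. Finally, $S$ is discrete, so Lemma~\ref{lem:empty} applies and its second part yields $h(S)\geq 2^{d-2}(k+6)$, as desired. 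As a consistency check, for $d=2$ the factor $2^{d-2}=1$ recovers exactly the bound $k+6$ of Example~\ref{exam:k+6}.
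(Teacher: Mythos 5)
Your proof is correct and follows exactly the route the paper takes: form the product crystal $S_0\times\Z^{d-2}$ and the product polytope $P_0\times[0,1]^{d-2}$, then invoke the second part of Lemma~\ref{lem:empty}. You simply spell out the verification of emptiness and the vertex count that the paper leaves implicit.
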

\begin{proof}
We can take the crystal from Example \ref{exam:k+6} and construct its direct product with $\Z^{d-2}$. Then the product of the empty polygon with $k+6$ vertices with the cube $[0,1]^{d-2}$ will give an empty polytope with $2^{d-2}(k+6)$ vertices.
\end{proof}

Thus, if $H(d,k)$ denotes the maximal Helly number among $d$-dimensional $k$-crystals (with large $k$), then $2^{d-2}(k+6)\leq H(d,k)\leq k2^d$ and $H(d,k)$ is linear in $k$ and exponential in $d$.

\section*{Acknowledgments}

The author is thankful to Pablo Sober\'on for discussion that led to this paper and for valuable comments. The author is also thankful to Alexey Glazyrin for valuable discussions. It is worth mentioning that he found a less technical proof of Theorem \ref{bound2dim} in private communication \cite{Gla}. Finally, the author is thankful to Dirk Frettl\"oh for help with pictures of aperiodic tilings.




\begin{thebibliography}{20}
\bibitem{ADS} N. Amenta, J.A. De Loera, P. Sober\'on, Helly's Theorem: New Variations and Applications, in {\it Algebraic and Geometric Methods in Discrete Mathematics}, eds.\ H.A. Harrington, M. Omar, and M. Wright, {\it Contemporary Mathematics}, {\bf 685} (2017), 55--95.

\bibitem{Ave} G. Averkov, On Maximal $S$-Free Sets and the Helly Number for the Family of $S$-Convex Sets, {\it SIAM J. Discrete Math.}, {\bf 27}:3 (2013), 1610--1624.

\bibitem{AW} G. Averkov, R. Weismnatel, Transversal numbers over subsets of linear spaces, {\it Adv. Geom.}, {\bf 12} (2012), 19--28.

\bibitem{Baa} M. Baake, A Guide to Mathematical Quasicrystals, in {\it Quasicrystals -- An Introduction of Structure, Physical Properties and Applications}, eds.\ J.-B.\ Suck, M.\ Schreiber, and P.\ H\"aussler, Springer, Berlin, 2002, 17--48.

\bibitem{BG} M. Baake, U. Grimm, \textit{Aperiodic Order. Vol. 1: A Mathematical Invitation},
Cambridge University Press, Cambridge, 2013.

\bibitem{BM} I. B\'ar\'any, J. Matou\v{s}ek, A fractional Helly theorem for convex lattice sets, {\it Adv. Math.}, {\bf 174} (2003), 227--235.

\bibitem{Bee} F.P.M. Beenker, Algebraic theory of non-periodic tilings of the plane by two simple building blocks: a square and a rhombus, Eindhoven University of Technology, 1982, TH-Report, 82-WSK04.

\bibitem{deB} N.G. de Bruijn, Algebraic theory of Penrose's nonperiodic tilings of the plane. I, II, {\it Nederl. Akad. Wetensch. Indag. Math.}, {\bf 43}:1 (1981), 39--52, 53--66.

\bibitem{DLOR} J.A. De Loera, R.N. La Haye, D. Oliveros, E. Rold\'an-Pensado, Helly numbers of algebraic subsets of $\R^d$ and an extension of Doignon's Theorem, {\it Adv. Geom.}, {\bf 17}:4 (2017), 473--482.

\bibitem{Doi}  J.-P. Doignon, Convexity in crystallographical lattices, {\it J. Geom.}, {\bf 3} (1973), 71--85.

\bibitem{FH} D. Frettl\"oh, F. G\"ahler, E. Harris, Tilings Encyclopedia, \url{http://tilings.math.uni-bielefeld.de/}.

\bibitem{Gla} A. Glazyrin, private communication, May 2016.

\bibitem{Hel} E. Helly, \"Uber Mengen konvexer K\"orper mit gemeinschaftlichen Punkten, {\it Jahresbericht der Deutschen Mathematiker-Vereinigung}, {\bf 32} (1923), 175--176.

\bibitem{Moo} R.V. Moody, Model sets: A survey, in {\it From Quasicrystals to More Complex Systems}, eds.\ F.\ Axel, F.\ D\'enoyer, J.P.\ Gazeau, EDP Sciences, Les Ulis, and Springer, Berlin, 2000, 145--166.

\end{thebibliography}
\end{document}